\newtheorem{theorem}{Theorem}[section]
\theoremstyle{definition}
\newtheorem{definition}[theorem]{Definition}
\theoremstyle{remark}
\newtheorem{remark}[theorem]{Remark}
\numberwithin{equation}{section}
\newcommand{\E}{{\ensuremath{\mathbb{E}}}}
\newcommand{\zz}{{\ensuremath{\mathbb{Z}}}}
\newcommand\N {{\mathbb N}}
\newcommand\A {{\mathcal A}}
\newcommand\R {{\mathbb R}}
\newcommand\Q {{\mathbb Q}}
\newcommand\Z {{\mathbb Z}}
\newcommand\NNN{\mbox{I\!I\!I\!\!N}}
\newcommand\RRR{\text{\rm I\!I\!R}}
\begin{document}

\thispagestyle{empty}

\title[An integer construction of infinitesimals] {An integer
construction of infinitesimals: Toward a theory of Eudoxus hyperreals
}

\author{Alexandre Borovik} \address{School of Mathematics, University
of Manchester, Oxford Street, Manchester, M13 9PL, United Kingdom}
\email{alexandre.borovik@manchester.ac.uk}

\author{Renling Jin} \address{Department of Mathematics, College of
Charleston, South Carolina 29424, USA}
\email{JinR@cofc.edu}

\author{Mikhail G. Katz$^{0}$} \address{Department of Mathematics, Bar
Ilan University, Ramat Gan 52900 Israel}\email{katzmik@macs.biu.ac.il}

\footnotetext{Supported by the Israel Science Foundation grant
1517/12}

\subjclass[2000]{ Primary 26E35; 
Secondary 03C20 
%
}

\keywords{Eudoxus; hyperreals; infinitesimals; limit ultrapower;
universal hyperreal field}

\begin{abstract}
A construction of the real number system based on almost homomorphisms
of the integers~$\Z$ was proposed by Schanuel, Arthan, and others.  We
combine such a construction with the ultrapower or limit ultrapower
construction, to construct the hyperreals out of integers.  In fact,
any hyperreal field, whose universe is a set, can be obtained by such
a one-step construction directly out of integers.  Even the maximal
(i.e., \emph{On}-saturated) hyperreal number system described by
Kanovei and Reeken (2004) and independently by Ehrlich (2012) can be
obtained in this fashion, albeit not in \emph{NBG}.  In \emph{NBG}, it
can be obtained via a one-step construction by means of a definable
ultrapower (modulo a suitable definable class ultrafilter).
\end{abstract}

\maketitle
\tableofcontents

\section{From Kronecker to Schanuel}

Kronecker famously remarked that, once we have the natural numbers in
hand, ``everything else is the work of man'' (see Weber \cite{Web}).
Does this apply to infinitesimals, as well?

The exposition in this section follows R.~Arthan \cite{Arthan04}.  A
function~$f$ from~$\zz$ to~$\zz$ is said to be an {\em almost
homomorphism} if and only if the function~$d_f$ from~$\zz \times \zz$
to~$\zz$ defined by \[d_f(p, q) =f(p+q) - f(p) - f(q)\] has bounded
(i.e., finite) range, so that for a suitable integer~$C$, we
have~$|d_f(p, q)| < C$ for all~$p, q \in \zz$.  The set denoted
\begin{equation}
\label{11z}
{\Z}\rightarrow {\Z}
\end{equation}
of all functions from~$\Z$ to~$\Z$ becomes an abelian group if we add
and negate functions pointwise:
\[
(f+g)(p) = f(p) + g(p), \qquad (-f)(p) = -f(p).
\]
It is easily checked that if~$f$ and~$g$ are almost homomorphisms then
so are~$f+g$ and~$-f$.  Let~$S$ be the set of all almost homomorphisms
from~$\Z$ to~$\Z$.  Then~$S$ is a subgroup of ${\Z}\rightarrow {\Z}$.
Let us write~$B$ for the set of all functions from~$\Z$ to~$\Z$ whose
range is bounded.  Then~$B$ is a subgroup of~$S$.  The ``Eudoxus
reals'' are defined as follows.%
\footnote{\label{eudoxus}The term ``Eudoxus reals'' has gained some
currency in the literature, see e.g., Arthan~\cite{Arthan04}.
Shenitzer~\cite[p.~45]{Sh} argues that Eudoxus anticipated 19th
century constructions of the real numbers.  The attribution of such
ideas to Eudoxus, based on an interpretation involving Eudoxus,
Euclid, and Book~5 of {\em The Elements\/}, may be historically
questionable.}

\begin{definition}
The abelian group~$\E$ of {\em Eudoxus reals} is the quotient group
$S/B$.
\end{definition}

Elements of~$\E$ are equivalence classes,~$[f]$ say, where~$f$ is an
almost homomorphism from~$\Z$ to~$\Z$, i.e.,~$f$ is a function from
$\Z$ to~$\Z$ such that~$d_f(p, q) = f(p, q) -f(p) - f(q)$ defines a
function from~$\Z \times \Z$ to~$\Z$ whose range is bounded. We have
$[f] = [g]$ if and only if the difference~$f - g$ has bounded range,
i.e., if and only if~$|{f(p) - g(p)}| < C$ for some~$C$ and all~$p$ in
$\Z$.

The addition and additive inverse in~$\E$ are induced by the pointwise
addition and inverse of representative almost homomorphisms:
\[
[f] + [g] = [f + g], \qquad -[f] = [-f]
\]
where~$f+g$ and~$-f$ are defined by
\[
(f+g)(p) = f(p) + g(p)
\]
and \[(-f)(p) = -f(p)\] for all~$p$ in $\Z$.

The group~$\E$ of Eudoxus reals becomes an {\em ordered\/} abelian
group if we take the set~$P\subset \E$ of positive elements to be
\[
P = \left\{ [f] \in \E : \sup_{m\in\N\subset\Z} f(m) = +\infty
\right\}.
\]

The multiplication on~$\E$ is induced by {\em composition\/} of almost
homomorphisms.  The multiplication turns~$\E$ into a commutative ring
with unit.  Moreover, this ring is a field.  Even more
surprisingly,~$\E$ is an ordered field with respect to the ordering
defined by $P$.

\begin{theorem}[See Arthan \cite{Arthan04}]
$\E$ is a complete ordered field and is therefore isomorphic to the
field of real numbers~$\R$.
\end{theorem}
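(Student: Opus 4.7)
My plan is to verify that $\E$ satisfies the axioms of a complete ordered field and then invoke the standard uniqueness result to conclude $\E \cong \R$. The four components to verify are: commutative ring with unit, field (multiplicative inverses), ordering compatible with operations, and Dedekind completeness.

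The technical crux is a single estimate: for any almost homomorphism $f$ with $|d_f|\leq C$, iteration of the defining inequality gives $|f(kn)-kf(n)|\leq (k-1)C$, whence
\[
|n f(m)-m f(n)| \;\leq\; (|m|+|n|)\,C
\]
by applying the triangle inequality to $f(mn)$ expanded in two ways. From this single estimate one deduces (i) that $f\circ g$ is an almost homomorphism whenever $f$ and $g$ are, (ii) that $f\circ g-g\circ f\in B$, so multiplication on $\E$ is commutative modulo $B$, and (iii) that the sequence $\bigl(f(n)/n\bigr)_n$ is Cauchy in $\R$ and hence convergent. Associativity and distributivity are then a matter of routine bookkeeping. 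Multiplicative inverses for $[f]\in P$ are supplied by setting $f^\sharp(n)$ equal to the least nonnegative integer $m$ with $f(m)\geq n$ (extended oddly to negative arguments); a direct calculation shows $f^\sharp$ is an almost homomorphism and $f\circ f^\sharp$ differs from $\mathrm{id}_{\Z}$ by a bounded function. Closure of $P$ under addition is immediate; closure under multiplication follows because the composition of two positive unbounded almost homomorphisms is again positive unbounded; trichotomy follows from the convergence of $f(n)/n$ in $\R$.

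For completeness, rather than constructing suprema within $\E$ directly, I would define
\[
\Phi\colon\E\longrightarrow\R,\qquad \Phi([f])=\lim_{n\to\infty}\frac{f(n)}{n},
\]
verify it is a well-defined order-preserving ring homomorphism using the displayed estimate, and exhibit an inverse by $r\mapsto[\,n\mapsto\lfloor rn\rfloor\,]$. This identifies $\E$ with $\R$ in one stroke and makes Dedekind completeness automatic. The main obstacle is the commutativity of the composition-induced multiplication: the identification of $[f\circ g]$ with $[g\circ f]$ is what makes composition into a sensible binary operation on $\E$ at all, and it depends essentially on the telescoping estimate above. Once that estimate is in hand, the ring structure, the order, the existence of inverses, and the construction of $\Phi$ all unfold routinely.
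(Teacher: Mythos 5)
The paper does not actually prove this theorem: it cites Arthan for the proof and only records the isomorphism $\R\to\E$, $\alpha\mapsto[f_\alpha]$ with $f_\alpha(n)=\lfloor\alpha n\rfloor$, which is exactly the inverse of your $\Phi$. Your sketch is sound, and its technical core --- the estimate $|nf(m)-mf(n)|\le(|m|+|n|)C$ obtained by expanding $f(mn)$ in two ways --- is indeed the engine of Arthan's argument: it gives commutativity of the composition product modulo $B$, the linear-growth bound $|f(m)-mL|\le C$ where $L=\lim f(n)/n$, and hence trichotomy and the order axioms. The one genuine structural difference is in how completeness is obtained. Arthan (and the spirit of the whole construction) establishes Dedekind completeness \emph{intrinsically} in $\E$, so that $\E$ serves as a construction of the reals from $\Z$; you instead build the isomorphism $\Phi([f])=\lim f(n)/n$ onto a pre-existing complete $\R$ and transport completeness back. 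That is a legitimate proof of the theorem as literally stated (the statement itself presupposes $\R$), and it is shorter, but it forfeits the foundational content: if one does not already have $\R$ in hand, your $\Phi$ has no codomain, whereas the intrinsic route is what justifies \emph{defining} $\R$ to be $\E$. Two smaller points to tighten: (a) well-definedness of $[f][g]=[f\circ g]$ requires both $f\circ g-f'\circ g\in B$ (clear, since $f-f'\in B$) and $f\circ g-f\circ g'\in B$, the latter using that an almost homomorphism sends inputs at bounded distance to outputs at bounded distance; (b) for the inverse $f^\sharp$ you should note that $[f]\in P$ forces $L>0$, hence $f(m)\ge mL-C$ on $\N$, so the least $m\ge 0$ with $f(m)\ge n$ exists, and it is the bound $|f(m+1)-f(m)|\le|f(1)|+C$ that makes $f\circ f^\sharp-\mathrm{id}$ bounded. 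With those details filled in, your argument is complete.
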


The isomorphism~$\R\to \E$ assigns to every real number~$\alpha\in
\R$, the class $[f_\alpha]$ of the function
\begin{eqnarray*}
f_\alpha: \Z & \longrightarrow & \Z \\
n & \mapsto & \lfloor\alpha n \rfloor,
\end{eqnarray*}
where~$\lfloor \,\cdot\, \rfloor$ is the integer part function.

In the remainder of the paper, we combine the above one-step
construction of the reals with the ultrapower or limit ultrapower
construction, to obtain hyperreal number systems directly out of the
integers. We show that any hyperreal field, whose universe is a set,
can be so obtained by such a one-step construction. Following this,
working in \emph{NBG} (von Neumann-Bernays-G\"odel set theory with the
Axiom of Global Choice), we further observe that by using a suitable
\emph{definable ultrapower}, even the maximal (i.e., the
\emph{On-saturated})%
\footnote{Recall that a model $M$ is \emph{On-saturated} if $M$ is
$\kappa$-saturated for any cardinal $\kappa$ in \emph{On}.  Here
\emph{On} (or \emph{ON}) is the class of all ordinals
(cf.~Kunen~\cite[p.~17]{Ku80}).  A hyperreal number system $\langle
\R, {}^*\!\R, S \in \mathfrak{F}\rangle$ is \emph{On-saturated} if it
satisfies the following condition: If $X$ is a set of equations and
inequalities involving real functions, hyperreal constants and
variables, then $X$ has a hyperreal solution whenever every finite
subset of $X$ has a hyperreal solution (see Ehrlich \cite[section 9,
p.~34]{Eh12}).}
hyperreal number system recently described by Ehrlich \cite{Eh12} can
be obtained in a one-step fashion directly from the integers.%
\footnote{Another version of such an \emph{On}-saturated number system
was introduced by Kanovei and Reeken (2004, \cite[Theorem
4.1.10(i)]{KR}) in the framework of axiomatic nonstandard analysis.}
As Ehrlich \cite[Theorem~20]{Eh12} showed, the ordered field
underlying an \emph{On}-saturated hyperreal field is isomorphic to
J. H. Conway's ordered field No, an ordered field Ehrlich describes as
the \emph{absolute arithmetic continuum} (modulo \emph{NBG}).

\section{Passing it through an ultraproduct}

Let now~$\mathcal{Z} = \mathbb{Z} ^\mathbb{N}$ be the ring of integer
sequences with operations of componentwise addition and
multiplication.  We define a {\em rescaling\/} to be a sequence~$\rho
= \langle \rho_n : n \in \mathbb{N}\rangle$ of almost
homomorphisms~$\rho_n: \mathbb{Z} \longrightarrow \mathbb{Z}$.
Rescalings are thought of as acting on~$\mathcal{Z}$, hence the name.
A rescaling~$\rho$ is called bounded if each of its
components,~$\rho_n$, is bounded.

Rescalings factorized modulo bounded rescalings form a commutative
ring~$\mathcal{E}$ with respect to addition and composition.
Quotients of~$\mathcal{E}$ by its maximal ideals are hyperreal fields.
Thus, hyperreal fields are factor fields of the ring of rescalings of
integer sequences.  This description is a tautological translation of
the classical construction, due to E.~Hewitt~\cite{Hew}, but it is
interesting for the sheer economy of the language used.  We will give
further details in the sections below.

 

\section{Cantor, Dedekind, and Schanuel}

The strategy of Cantor's construction of the real numbers%
\footnote{The construction of the real numbers as equivalence classes
of Cauchy sequences of rationals, usually attributed to Cantor, is
actually due to H.~M\'eray (1869, \cite{Me}) who published three years
earlier than E.~Heine and Cantor.}
can be represented schematically by the diagram
\begin{equation}
\label{11}
\R : = \left( \N \to (\Z \times \Z )^{\phantom{I}}_\alpha
\right)_\beta
\end{equation}
where the subscript~$\alpha$ evokes the passage from a pair of
integers to a rational number; the arrow $\to$ alludes to forming
sequences; and subscript~$\beta$ reminds us to select Cauchy sequences
modulo equivalence.  Meanwhile, Dedekind proceeds according to the
scheme
\begin{equation}
\label{22}
\R : = \left( \mathcal{P}(\Z\times\Z)^{\phantom{I}}_\alpha
\right)_\gamma
\end{equation}
where~$\alpha$ is as above,~$\mathcal{P}$ alludes to the set-theoretic
power operation, and~$\gamma$ selects his cuts.  For a history of the
problem, see P.~Ehrlich~\cite{Eh06}.

An alternative approach was proposed by Schanuel, and developed by
N.~A'Campo~\cite{AC}, R.~Arthan~\cite{Arthan01}, \cite{Arthan04},%
\footnote{Arthan's \emph{Irrational construction of $\R$ from $\Z$\/}
\cite{Arthan01} describes a different way of skipping the rationals,
based on the observation that the Dedekind construction can take as
its starting point any Archimedean densely ordered commutative group.
The construction delivers a completion of the group, and one can
define multiplication by analyzing its order-preserving endomorphisms.
Arthan uses the additive group of the ring $\Z[\sqrt{2}]$, which can
be viewed as $\Z\times\Z$ with an ordering defined using a certain
recurrence relation.}
T.~Grundh\"ofer~\cite{Gru}, R.~Street~\cite{St}, O.~Deiser
\cite[pp.~112-127]{De}, and others, who follow the formally simpler
blueprint
\begin{equation}
\label{33}
\R:=(\Z\to\Z)_\sigma
\end{equation}
where~$\sigma$ selects certain almost homomorphisms from~$\Z$ to
itself, such as the map
\begin{equation}
\label{14}
a\mapsto \lfloor ra \rfloor
\end{equation}
for real~$r$, modulo equivalence (think of~$r$ as the ``large-scale
slope'' of the map).%
\footnote{One could also represent a real by a string based on its
decimal expansion, but the addition in such a presentation is highly
nontrivial due to carry-over, which can be arbitrarily long.  In
contrast, the addition of almost homomorphisms is term-by-term.
Multiplication on the reals is induced by composition in~$\Z\to\Z$,
see formula~\eqref{11z}.}
Such a construction has been referred to as the {\em Eudoxus reals\/}.%
\footnote{See footnote~\ref{eudoxus} for a discussion of the term.}
The construction of~$\R$ from~$\Z$ by means of almost homomorphisms
has been described as ``skipping the rationals~$\Q$''.

We will refer to the arrow in~\eqref{33} as the {\em space
dimension\/}, so to distinguish it from the {\em time dimension\/}
occurring in the following construction of an extension of~$\N$:
\begin{equation}
\label{44}
(\N \to \N)_{\tau^{\phantom{I}}_{\!co\!f}}
\end{equation}
where~$\tau^{\phantom{I}}_{\!co\!f}$ identifies sequences
$f,g:\N\to\N$ which {\em differ\/} on a finite set of indices:
\begin{equation}
\label{55}
\{ n \in \N : f(n) = g(n) \} \quad \hbox{is cofinite.}%
\footnote{Note that addition is term-by-term in the time direction, as
well.}
\end{equation}
Here the {\em constant\/} sequences induce an inclusion
\[
\N\to (\N\to\N)_{\tau^{\phantom{I}}_{\!co\!f}}.
\]
Such a construction is closely related to (a version of)
the~$\Omega$-calculus of Schmieden and Laugwitz \cite{SL}.  The
resulting semiring has zero divisors.  To obtain a model which
satisfies the first-order Peano axioms, we need to quotient it
further.  Note that up to this point the construction has not used any
nonconstructive foundational material such as the axiom of choice or
the weaker axiom of the existence of nonprincipal ultrafilters.

\section{Constructing an infinitesimal-enriched continuum}

The traditional ultrapower construction of the hyperreals proceeds
according to the blueprint
\[
(\N\to\R)_{\mathcal U}
\]
where~${\mathcal U}$ is a fixed ultrafilter on~$\N$.  Replacing~$\R$
by any of the possible constructions of~$\R$ from~$\Z$, one in
principle obtains what can be viewed as a direct construction of the
hyperreals out of the integers~$\Z$.  Formally, the most economical
construction of this sort passes via the Eudoxus reals.

An infinitesimal-enriched continuum can be visualized by means of an
infinite-magnification microscope as in Figure~\ref{tamar}.

\begin{figure}
\includegraphics[height=2in]{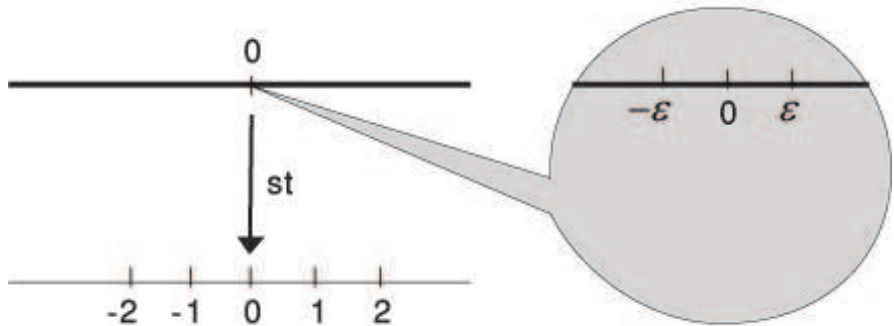}
\caption{Zooming in on infinitesimal~$\epsilon$}
\label{tamar}
\end{figure}

To construct such an infinitesimal-enriched field, we have to deal
with the problem that the semiring
$(\N\to\N)_{\tau^{\phantom{I}}_{\!co\!f}}$ constructed in the previous
section contains zero divisors.

To eliminate the zero divisors, we need to quotient the ring further.
This is done by extending the equivalence relation by means of a
maximal ideal defined in terms of an ultrafilter.  Thus, we extend the
relation defined by~\eqref{55} to the relation declaring~$f$ and~$g$
equivalent if
\begin{equation}
\label{66}
\{ n \in \N : f(n) = g(n) \} \in \mathcal{U},
\end{equation}
where~$ \mathcal{U}$ is a fixed ultrafilter on~$\N$, and add
negatives.  The resulting modification of~\eqref{44}, called an {\em
ultrapower\/}, will be denoted
\begin{equation}
\label{77}
\NNN : = (\N \to \N)_\tau
\end{equation}
and is related to Skolem's construction in 1934 of a nonstandard model
of arithmetic~\cite{Sk}.  We refer to the arrow in \eqref{77} as {\em
time\/} to allude to the fact that a sequence that increases without
bound for large time will generate an infinite ``natural'' number
in~\NNN.  A ``continuous'' version of the ultrapower construction was
exploited by Hewitt in constructing his hyperreal fields in 1948
(see~\cite{Hew}).

The traditional ultrapower approach to constructing the hyperreals is
to start with the field of real numbers~$\R$ and build the ultrapower
\begin{equation}
\label{88}
(\N \to \R)_\tau
\end{equation}
where the subscript~$\tau$ is equivalent to that of~\eqref{77}, see
e.g.~Goldblatt ~\cite{Go}.  For instance, relative to Cantor's
procedure~\eqref{11}, this construction can be represented by the
scheme
\[
(\N \to ( \N \to (\Z \times \Z ) _\alpha )_\beta )_\tau \;;
\]
however, this construction employs needless intermediate procedures as
described above.  Our approach is to follow instead the ``skip the
rationals'' blueprint
\begin{equation}
\label{99}
\RRR:=(\N\to\Z^\N)_{\sigma\tau}
\end{equation}
where the image of each~$a\in \N$ is the sequence~$u^a\in\Z^\N$ with
general term~$u_n^a$, so that~$u^a = \langle u^a_n : n\in \N \rangle$.
Thus a general element of \RRR\; is generated (represented) by the
sequence
\begin{equation}
\label{25}
\left\langle a \mapsto \left( n \mapsto u^a_n \right) : a \in \N
\right\rangle .
\end{equation}
Here one requires that for each fixed element~$n_0\in\N$ of the
exponent copy of~$\N$, the map
\[
a\mapsto u_{n_0}^a
\]
is an almost homomorphism (space direction), while~$\tau$
in~\eqref{99} alludes to the ultrapower quotient in the
time-direction~$n$.  For instance, we can use almost homomorphisms of
type~\eqref{14} with~$r=\frac{1}{n}$.  Then the sequence
\begin{equation}
\label{10}
\left\langle a \mapsto \left( n \mapsto \left\lfloor \tfrac{a}{n}
\right\rfloor \right) : a \in \N \right\rangle
\end{equation}
generates an infinitesimal in~\RRR\; since the almost homomorphisms
are ``getting flatter and flatter'' for large time~$n$.

\begin{theorem}
Relative to the construction~\eqref{99}, we have a natural
inclusion~$\R\subset \hbox{\rm \RRR}$.  Furthermore, \hbox{\rm \RRR}
is isomorphic to the model ${}^*\R$ of the hyperreals obtained by
quotienting~$\R^\N$ by the chosen ultrafilter, as in~\eqref{88}.
\end{theorem}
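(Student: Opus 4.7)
The plan is to exhibit an explicit order-preserving field isomorphism $\Phi: \RRR \to {}^*\R = \R^\N / \mathcal{U}$, and then to realize the natural inclusion $\R \subset \RRR$ as the pullback along $\Phi$ of the standard diagonal embedding $\R \hookrightarrow {}^*\R$. The key observation underpinning the construction of $\Phi$ is that the composite quotient in~\eqref{99} factors cleanly: the outer ultrafilter quotient $\tau$ and the slicewise Eudoxus quotient $\sigma$ may be performed in either order. Thus, without loss of generality, an element of $\RRR$ may be represented by a family $\langle f_n : n \in \N \rangle$ of almost homomorphisms $f_n : \Z \to \Z$, and two such families $\langle f_n \rangle$ and $\langle g_n \rangle$ are identified iff
$$\{ n \in \N : f_n - g_n \text{ has bounded range} \} \in \mathcal{U}.$$

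Given this reformulation, I would define
$$\Phi\bigl( [\langle f_n \rangle] \bigr) = \bigl[ \langle r_n \rangle \bigr]_{\mathcal{U}}, \qquad r_n := [f_n] \in \E \cong \R,$$
invoking the isomorphism $\E \cong \R$ from the first theorem above. Well-definedness and injectivity of $\Phi$ both follow at once from the identity
$$\{ n : [f_n] = [g_n] \text{ in } \E \} = \{ n : f_n - g_n \text{ is bounded} \},$$
which lies in $\mathcal{U}$ iff the families are $\sigma\tau$-equivalent iff the sequences $\langle r_n \rangle, \langle r_n' \rangle$ are $\tau$-equivalent in $\R^\N$. For surjectivity, I would lift any $[\langle r_n \rangle] \in {}^*\R$ using the canonical representatives $f_n(a) = \lfloor r_n a \rfloor$ from~\eqref{14}. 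The natural inclusion $\R \subset \RRR$ is then realized by sending $r \in \R$ to the class of the constant-in-time family $f_n = f_r$ with $f_r(a) = \lfloor r a \rfloor$; under $\Phi$ this maps to the diagonal class $[\langle r, r, \ldots \rangle] \in {}^*\R$, i.e., the standard embedding.

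The verification that $\Phi$ preserves the ordered-ring structure reduces to matching operations in each time slice. Addition in $\RRR$ is pointwise in both the space and time variables, and via the first theorem this matches pointwise addition of the $r_n$, hence ultrapower addition; the positive cone $P$ of~$\E$ transports correspondingly. The main obstacle will be multiplicative compatibility. Multiplication in $\RRR$ is defined slicewise as \emph{composition} of almost homomorphisms in the space variable, so one must confirm that (a) this operation preserves the almost-homomorphism property uniformly in $n$, yielding a genuine element of $\RRR$, and (b) the composite $f_n \circ g_n$ represents the real number $r_n r_n' = [f_n][g_n]$, so that slicewise composition descends to the pointwise product and hence to ultrapower multiplication. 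Point~(b) is the nontrivial content of the first theorem applied uniformly in $n$, together with a check that the composed family is stable under the combined $\sigma\tau$ equivalence.
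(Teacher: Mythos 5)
Your proposal is correct and follows essentially the same route as the paper: both apply the Eudoxus-to-real correspondence slice-by-slice in the time variable $n$ and then pass to the ultrapower, with the inclusion $\R\subset\hbox{\rm\RRR}$ given by constant-in-time families $a\mapsto\lfloor ra\rfloor$. The only cosmetic difference is that the paper realizes the map $[f_n]\mapsto r_n$ concretely via the limit $U_n=\lim_a u_n^a/a$ rather than by invoking the isomorphism $\E\cong\R$ abstractly; these coincide, and your version spells out the well-definedness, surjectivity, and ring-structure checks that the paper leaves implicit.
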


\begin{proof}
Given a real number~$r\in \R$, we choose the constant sequence given
by~$u_n^a=\lfloor ra\rfloor$ (the sequence is constant in time~$n$).
Sending~$r$ to the element of \RRR\; defined by the sequence
\[
\left\langle a \mapsto \left( n \mapsto \left\lfloor ra \right\rfloor
\right) : a \in \N \right\rangle
\]
yields the required inclusion~$\R\hookrightarrow \RRR$.  The
isomorphism~$\RRR\to \, ^*\R$ is obtained by letting
\[
U_n= \lim \frac{u_n^a}{a}
\]
for each~$n\in\N$, and sending the element of \RRR\; represented
by~\eqref{25} to the hyperreal represented by the sequence~$\langle
U_n : n\in \N \rangle$.
\end{proof}

Denoting by~$\Delta x$ the infinitesimal generated by the integer
object~\eqref{10}, we can then define the derivative of~$y=f(x)$
at~$x$ following Robinson as the real number~$f'(x)$ infinitely close
(or, in Fermat's terminology, {\em adequal\/})%
\footnote{See A.~Weil~\cite[p.~1146]{We}.}
to the infinitesimal ratio~$\frac{\Delta y}{\Delta x}\in\RRR$.

Applications of infinitesimal-enriched continua range from aid in
teaching calculus (\cite{El}, \cite{KK1}, \cite{KK2}, \cite{KT},
\cite{NB}) to the Bolzmann equation (see L.~Arkeryd~\cite{Ar81,
Ar05}); modeling of timed systems in computer science (see H.~Rust
\cite{Rust}); mathematical economics (see R.~Anderson \cite{An00});
mathematical physics (see Albeverio {\em et al.\/} \cite{Alb}); etc.
A comprehensive re-appraisal of the historical antecedents of modern
infinitesimals has been undertaken in recent work by B\l aszczyk et
al.~\cite{BKS}, Borovik et al.~\cite{BK}, Br\aa ting \cite{Br},
Kanovei \cite{Kan}, Katz \& Katz \cite{KK11a, KK11b, KK11c, KK11d},
Katz \& Leichtnam \cite{KL}, Katz and Sherry \cite{KS1, KS2}, and
others.  A construction of infinitesimals by ``splitting'' Cantor's
construction of the reals is presented in Giordano et al.~\cite{GK11}.

\section{Formalization}
\label{five}

In this and the next sections we formalize and generalize the
arguments in the previous sections.  We show that by a one-step
construction from $\Z$-valued functions we can obtain any given
(set) hyperreal field. We can even obtain a universal hyperreal field
which contains an isomorphic copy of every hyperreal field,
by a one-step construction from $\Z$-valued functions.

We assume that the reader is familiar with some basic concepts of
model theory.  Consult \cite{CK} or \cite{Ke} for concepts and
notations undefined here.

Let $\mathfrak{A}$ and $\mathfrak{B}$ be two models in a language
$\mathcal{L}$ with base sets $A$ and~$B$, respectively.  The model
$\mathfrak{B}$ is called an $\mathcal{L}$-elementary extension of the
model $\mathfrak{A}$, or $\mathfrak{A}$ is an $\mathcal{L}$-elementary
submodel of $\mathfrak{B}$, if there is an embedding $e: A\rightarrow
B$, called an $\mathcal{L}$-elementary embedding, such that for any
first-order $\mathcal{L}$-sentence $\varphi(a_1,a_2,\ldots,a_n)$ with
parameters $a_1,a_2,\ldots,a_n\in A$, $\varphi(a_1,a_2,\ldots,a_n)$ is
true in $\mathfrak{A}$ if and only if
$\varphi(e(a_1),e(a_2),\ldots,e(a_n))$ is true in $\mathfrak{B}$.

Let $\mathfrak{A}$ and $\mathfrak{B}$ be two models in language $\mathcal{L}$
with base sets $A$ and $B$, respectively. Let
\[
\mathcal{L}'=\mathcal{L}\cup \{P_R:\exists m\in\N, R\subseteq A^m\},
\]
i.e., $\mathcal{L}'$ is formed by adding to $\mathcal{L}$ an
$m$-dimensional relational symbol $P_R$ for each $m$-dimensional
relation $R$ on $A$ for any positive integer $m$.  Let $\mathfrak{A}'$ be
the natural $\mathcal{L}'$-model with base set $A$, i.e., the
interpretation of $P_R$ in $\mathfrak{A}'$ for each $R\subseteq A^m$ is
$R$. The model $\mathfrak{B}$ is called a {\em complete elementary
extension\/} of $\mathfrak{A}$ if $\mathfrak{B}$ can be expanded to an
$\mathcal{L}'$-model $\mathfrak{B}'$ with base set $B$ such that
$\mathfrak{B}'$ is an $\mathcal{L}'$-elementary extension of $\mathfrak{A}'$.

It is a well-known fact that if $\mathfrak{B}$ is an ultrapower of
$\mathfrak{A}$ or a limit ultrapower of $\mathfrak{A}$,
then $\mathfrak{B}$ is a complete elementary extension of $\mathfrak{A}$.

In this section we always view the set $\R$ as the set of all Eudoxus
reals.

An ordered field is called a hyperreal field if it is a proper
complete elementary extension of $\R$.  Let
\[
\mathcal{L}'=\{+,\cdot,\leqslant,0,1,P_R\}_{R\in\mathcal{R}}
\]
where $\mathcal{R}$ is the collection of all finite-dimensional
relations on $\R$.  We do not distinguish between $\R$ and the
$\mathcal{R}'$-model
$\mathfrak{R}=(\R;+,\cdot,\leqslant,0,1,R)_{R\in\mathcal{R}}$.  By
saying that $^*\R$ is a hyperreal field we will sometimes mean that
$^*\R$ is the base set of the hyperreal field, but at other times we
mean that~$^*\R$ is the hyperreal field viewed as an
$\mathcal{L}'$-model.  We will spell out the distinction when it
becomes necessary.

\medskip

Recall that $S$ is the set of all bounded functions from $\Z$ to~$\Z$.
For a pair of almost homomorphisms $f,g:\Z\rightarrow\Z$, we will
write $f\sim_{\sigma} g$ if and only if $f-g\in S$. Let $I$ be an
infinite set.  If $F(x,y)$ is a two-variable function from $\Z\times
I$ to $\Z$ and $i$ is a fixed element in $I$, we write $F(x,i)$ for
the one-variable function $F_i(x)=F(x,i)$ from $\Z$ to $\Z$.
\begin{definition}
Let $I$ be any infinite set. We set
\begin{eqnarray*}
\lefteqn{\A(\Z\times I,\Z)=}\\ & &=\{F\in\Z^{\Z\times I}:\forall i\in
I,\, F(x,i)\,\mbox{ is an almost homomorphism.}\}
\end{eqnarray*}
\end{definition}
Let $\mathcal{U}$ be a fixed non-principal ultrafilter on $I$. For a
pair of functions $f,g:I\rightarrow J$ for some set $J$, we set
\[
f\sim_{\tau} g\,\mbox{ if and only if }\, \{i\in
I:f(i)=g(i)\}\in\mathcal{U}.\] Let $[f]_{\tau}=\{g\in I^J:g\sim_{\tau}
f\}$.

\begin{definition}
For any $F,G\in \A(\Z\times I,\Z)$ we will write
\[
F\sim_{\sigma\tau}G\,\mbox{ if and only if }\, \{i\in
I:F(x,i)\sim_{\sigma} G(x,i)\}\in\mathcal{U}.
\]
\end{definition}
It is easy to check that $\sim_{\sigma\tau}$ is an equivalence
relation on $\A(\Z\times I,\Z)$. For each $F\in\A(\Z\times I,\Z)$ let
\[
[F]_{\sigma\tau}=\{G\in\A(\Z\times I,\Z):G\sim_{\sigma\tau} F\}.
\]
For each $F(x,y)\in\A(\Z\times I,\Z)$ we can consider
$[F(\cdot,y)]_{\sigma}$ as a function of $y$ from $I$ to $\R$. Thus
the map
\[
\Phi:\A(\Z\times I,\Z)/\!\sim_{\sigma\tau}\rightarrow\R^I/\mathcal{U}
\]
such that $\Phi([F]_{\sigma\tau})=[[F]_{\sigma}]_{\tau}$ is an
isomorphism from $\A(\Z\times I,\Z)/\!\sim_{\sigma\tau}$ to
$\R^I/\mathcal{U}$. Hence $\A(\Z\times I,\Z)/\!\sim_{\sigma\tau}$ can
be viewed as an ultrapower of $\R$. Therefore, the quotient
\[
\RRR_I=\A(\Z\times I,\Z)/\!\sim_{\sigma\tau}
\]
is a hyperreal field constructed in one step from the set of
$\Z$-valued functions $\A(\Z\times I,\Z)$.

If the set $I$ is $\N$, then $\A(\Z\times \N,\Z)/\!\sim_{\sigma\tau}$
is exactly the hyperreal field~$\RRR$ mentioned in the previous
sections.  Since $I$ can be any infinite set, we can construct a
hyperreal field of arbitrarily large cardinality in one step from a
set of $\Z$-valued functions $\A(\Z\times I,\Z)$.

\section{Limit ultrapowers and definable ultrapowers}
\label{six}

If we consider a limit ultrapower instead of an ultrapower, we can
obtain any (set) hyperreal field by a one-step construction from a set
of $\Z$-valued functions $\A(\Z\times I,\Z)|\mathcal{G}$. The reader
could consult (Keisler~\cite{Ke}) for the notations, definitions, and
basic facts about limit ultrapowers.  The main fact we need here is
the following theorem (see Keisler \cite[Theorem~3.7]{Ke}).

\begin{theorem}
If $\mathfrak{A}$ and $\mathfrak{B}$ are two models of the same
language, then~$\mathfrak{B}$ is a complete elementary extension of
$\mathfrak{A}$ if and only if $\mathfrak{B}$ is a limit ultrapower of
$\mathfrak{A}$.
\end{theorem}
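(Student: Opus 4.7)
The plan is to prove the two directions of this biconditional separately, since the ``limit ultrapower $\Rightarrow$ complete elementary extension'' direction is essentially a preservation theorem while the converse is the genuine structural content.

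For the easy direction, assume $\mathfrak{B} = \mathfrak{A}^I|\mathcal{G}/\mathcal{U}$ is a limit ultrapower, where $\mathcal{U}$ is an ultrafilter on $I$ and $\mathcal{G}$ is a filter on the Boolean algebra of equivalence relations on $I$. I would first expand the language $\mathcal{L}$ to $\mathcal{L}'$ by adding a symbol $P_R$ for every finitary relation $R$ on the base set $A$, and let $\mathfrak{A}'$ be the canonical $\mathcal{L}'$-expansion of $\mathfrak{A}$. By \L{}o\'s's theorem, the full ultrapower $\mathfrak{A}'^I/\mathcal{U}$ is an $\mathcal{L}'$-elementary extension of $\mathfrak{A}'$. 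The limit ultrapower is the substructure of the full ultrapower consisting of those $[f]_\mathcal{U}$ for which the equalizer $\{(i,j)\in I\times I:f(i)=f(j)\}$ belongs to $\mathcal{G}$. I would then apply the Tarski--Vaught test: given a formula $\varphi(x,\bar y)$ and parameters $[f_1]_\mathcal{U},\ldots,[f_n]_\mathcal{U}$ from the limit ultrapower such that $\mathfrak{A}'^I/\mathcal{U}\models\exists x\,\varphi(x,\bar{[f]}_\mathcal{U})$, I would use \L{}o\'s's theorem to produce a Skolem-like witness $g:I\to A$ whose equalizer refines the join of the equalizers of the $f_k$'s, and hence lies in $\mathcal{G}$ by the filter axioms. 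This gives elementarity of the inclusion into the full ultrapower, and composing with the diagonal shows that $\mathfrak{B}$ is an $\mathcal{L}'$-elementary extension of $\mathfrak{A}'$, i.e., a complete elementary extension.

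For the hard direction, suppose $e:\mathfrak{A}\to\mathfrak{B}$ is a complete elementary embedding; I want to realize $\mathfrak{B}$ as a limit ultrapower of $\mathfrak{A}$. The strategy is to take $I = B$ as the index set, and for every element $b\in B$ produce a representing function $f_b:I\to A$ together with an ultrafilter $\mathcal{U}$ on $I$ and filter $\mathcal{G}$ of equivalence relations so that $b\mapsto [f_b]_\mathcal{U}$ is an isomorphism of $\mathfrak{B}$ onto $\mathfrak{A}^I|\mathcal{G}/\mathcal{U}$. The ultrafilter $\mathcal{U}$ is obtained by using the complete elementary diagram of $\mathfrak{B}$ over $\mathfrak{A}$: for each $\mathcal{L}'$-formula $\varphi(\bar x)$ and tuple $\bar b$ satisfying it in $\mathfrak{B}$, the ``trace'' set $\{\bar\imath\in I^n:\mathfrak{A}'\models\varphi(\bar\imath)\}$ must be placed in (a power of) $\mathcal{U}$; an application of Zorn's lemma, together with the finite satisfaction property guaranteed by elementarity, produces such an ultrafilter. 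The filter $\mathcal{G}$ is then defined as the set of equivalence relations $E$ on $I$ such that some $f_b$ is $E$-invariant, closed under finite intersection.

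The main obstacle will be in this converse direction: ensuring simultaneously that (a) every $b\in B$ gets a representative with equalizer in $\mathcal{G}$, and (b) no ``extra'' elements of the full ultrapower $\mathfrak{A}^I/\mathcal{U}$ sneak into the limit ultrapower beyond those coming from $\mathfrak{B}$. Concretely, one must verify that if $[f]_\mathcal{U}\in\mathfrak{A}^I|\mathcal{G}/\mathcal{U}$, then its equalizer lying in $\mathcal{G}$ forces $f$ to agree $\mathcal{U}$-almost-everywhere with some $f_b$; this is where the precise construction of $\mathcal{G}$ from the diagram of $\mathfrak{B}$ is delicate and where the interplay between elementarity in the expanded language $\mathcal{L}'$ and the filter-theoretic machinery is essential. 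Once the isomorphism is verified on atomic formulas, full elementarity then follows from \L{}o\'s's theorem and the easy direction established above.
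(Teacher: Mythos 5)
You should first be aware that the paper offers no proof of this statement: it is quoted verbatim from Keisler (\emph{Limit ultrapowers}, cited as \cite[Theorem 3.7]{Ke}) and used as a black box. So I am comparing your sketch against the known proof rather than against anything in the paper. Your easy direction is essentially the standard argument and is sound in outline: \L{}o\'s for the full ultrapower, then showing the limit ultrapower is an elementary substructure by choosing, for an existential formula with parameters $[f_1],\dots,[f_n]$, a witness $g(i)$ that depends only on the tuple $(f_1(i),\dots,f_n(i))$. One correction of direction: you need $eq(g)\supseteq\bigcap_k eq(f_k)$, i.e.\ the equalizer of $g$ must \emph{contain} (be coarser than) the intersection of the equalizers, since $\mathcal{G}$ is closed under supersets, not subsets; saying the equalizer ``refines'' something points the wrong way, though the construction you describe does yield the correct containment.

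The genuine gap is in the converse. Taking $I=B$ cannot work as stated: there is no canonical $f_b:B\to A$, and your ``trace sets'' $\{\bar\imath\in I^n:\mathfrak{A}'\models\varphi(\bar\imath)\}$ do not typecheck, since the indices range over $B$ while $\mathfrak{A}'$ has base $A$. Keisler's construction instead takes $I$ to be the set of all functions from $B$ into $A$, represents each $b\in B$ by the evaluation map $f_b(i)=i(b)$, generates $\mathcal{U}$ by the sets $\{i\in I:(i(b_1),\dots,i(b_n))\in R\}$ for each relation $R$ on $A$ with $\mathfrak{B}'\models P_R(b_1,\dots,b_n)$ (the finite intersection property being exactly elementarity of $\mathfrak{A}'\preceq\mathfrak{B}'$), and generates $\mathcal{G}$ by the equivalence relations $eq(f_b)$. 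The point you flag as ``delicate'' --- that nothing extra sneaks into the limit ultrapower --- is precisely where completeness, as opposed to mere elementarity, is indispensable: if $eq(g)\supseteq\bigcap_{k\le n}eq(f_{b_k})$, then $g(i)=h(i(b_1),\dots,i(b_n))$ for some function $h:A^n\to A$, and one identifies $[g]_{\mathcal U}$ with the image of the element $b\in B$ obtained by applying the predicate $P_{h}$ (the graph of $h$, available only because \emph{every} relation on $A$ has a symbol in $\mathcal{L}'$) to $(b_1,\dots,b_n)$ in $\mathfrak{B}'$. Without this device your map $b\mapsto[f_b]_{\mathcal U}$ has no reason to be surjective, and your proposed $\mathcal{G}$ (equivalence relations refined by some $eq(f_b)$) is downward closed rather than a filter. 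So the converse direction as proposed would not go through without importing this key idea.
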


Given any (set) hyperreal field $^*\R$, let $\mathcal{U}$ be the
ultrafilter on an infinite set $I$ and let $\mathcal{G}$ be the filter
on $I\times I$ such that $^*\R$ is isomorphic to the limit ultrapower
$(\R^I/\mathcal{U})|\mathcal{G}$. We can describe the limit ultrapower
$(\R^I/\mathcal{U})|\mathcal{G}$ in one step from the set of
$\Z$-valued functions $\A(\Z\times I,\Z)|\mathcal{G}$. For each
$F\in\A(\Z\times I,\Z)$, let
\[
eq(F)=\{(i,j)\in I\times I:[F(x,i)]_{\sigma}=[F(x,j)]_{\sigma}\}.
\]
Let
\[
\A(\Z\times I,\Z)|\mathcal{G}=\{F\in \A(\Z\times
I,\Z):eq(F)\in\mathcal{G}\}.\] Notice that $\A(\Z\times
I,\Z)|\mathcal{G}$ is a subset of $\A(\Z\times I,\Z)$. Hence
$(\A(\Z\times I,\Z)|\mathcal{G})/\!\sim_{\sigma\tau}$ can be viewed as
a subset of $\A(\Z\times I,\Z)/\!\sim_{\sigma\tau}$. Again for each
\[
F\in \A(\Z\times I,\Z)/\!\sim_{\sigma\tau}
\]
let
\[
\Phi([F]_{\sigma\tau})=[[F]_{\sigma}]_{\tau}.
\]
Then $\Phi$ is an isomorphism from
\[
(\A(\Z\times I,\Z)|\mathcal{G})/\!\sim_{\sigma\tau}
\]
to $(\R^I/\mathcal{U})|\mathcal{G}$. Therefore, $(\A(\Z\times
I,\Z)|\mathcal{G})/\!\sim_{\sigma\tau}$ as an elementary subfield of
$\A(\Z\times I,\Z)/\!\sim_{\sigma\tau}$ is isomorphic to the hyperreal
field $^*\R$.

\begin{theorem}
An isomorphic copy of any (set) hyperreal field $^*\R$ can be obtained
by a one-step construction from a set of $\Z$-valued functions
$\A(\Z\times I,\Z)|\mathcal{G}$ for some filter $\mathcal{G}$ on
$I\times I$.
\end{theorem}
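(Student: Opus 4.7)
The plan is to invoke the Keisler limit-ultrapower representation theorem cited above and then port the representation through the Eudoxus description of $\R$ via the map $\Phi$ already sketched in the discussion preceding the statement. First, fix any (set) hyperreal field $^*\R$. Since $^*\R$ is by definition a proper complete elementary extension of $\R$ in the language $\mathcal{L}'$, Keisler's theorem furnishes an infinite set $I$, a non-principal ultrafilter $\mathcal{U}$ on $I$, and a filter $\mathcal{G}$ on $I\times I$, together with an $\mathcal{L}'$-isomorphism
\[
\Psi : (\R^I/\mathcal{U})|\mathcal{G} \longrightarrow {}^*\R.
\]

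Next, I would verify that the map $\Phi([F]_{\sigma\tau}) = [[F]_\sigma]_\tau$ restricts to an $\mathcal{L}'$-isomorphism
\[
\Phi : (\A(\Z\times I,\Z)|\mathcal{G})/\!\sim_{\sigma\tau} \longrightarrow (\R^I/\mathcal{U})|\mathcal{G}.
\]
The verification proceeds in the following steps: (a) well-definedness, immediate from the definitions of $\sim_{\sigma\tau}$ and $\sim_\tau$; (b) that the image lands inside the limit ultrapower, via the identity
\[
eq(F) = \{(i,j)\in I\times I : [F(\cdot,i)]_\sigma = [F(\cdot,j)]_\sigma\},
\]
so the condition $eq(F)\in\mathcal{G}$ built into the domain matches the defining condition of $(\R^I/\mathcal{U})|\mathcal{G}$; (c) injectivity, by reversing (a); (d) surjectivity, by choosing, for each $[h]_\tau$ in the codomain, an almost-homomorphism representative $f_i$ of the Eudoxus real $h(i)$ for every $i\in I$, and setting $F(x,i):=f_i(x)$; and (e) preservation of the $\mathcal{L}'$-structure, which is inherited from the unrestricted isomorphism $\A(\Z\times I,\Z)/\!\sim_{\sigma\tau}\cong\R^I/\mathcal{U}$ already established in Section~\ref{five}. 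Composing $\Psi\circ\Phi$ then yields the desired isomorphic copy of $^*\R$ constructed in one step from $\Z$-valued functions.

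The one substantive point --- and what I expect to be the main, though mild, obstacle --- is step (b): making sure that the filter condition $eq(F)\in\mathcal{G}$ is exactly the right way to carve out the limit-ultrapower subfield at the level of $\Z$-valued functions. The bridging identity displayed above renders this transparent once one observes that $[F(\cdot,i)]_\sigma$ depends only on the $\sim_\sigma$-class of $F(\cdot,i)$, so that passage to Eudoxus reals commutes with the restriction operator $(\,\cdot\,)|\mathcal{G}$. The remaining verifications then reduce to bookkeeping essentially carried out in the preceding section.
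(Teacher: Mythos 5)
Your proposal is correct and follows essentially the same route as the paper: invoke Keisler's limit-ultrapower characterization of complete elementary extensions to realize $^*\R$ as $(\R^I/\mathcal{U})|\mathcal{G}$, and then check that $\Phi([F]_{\sigma\tau})=[[F]_\sigma]_\tau$ restricts to an isomorphism from $(\A(\Z\times I,\Z)|\mathcal{G})/\!\sim_{\sigma\tau}$ onto the limit ultrapower, the key point being that $eq(F)$ depends only on the $\sim_\sigma$-classes $[F(\cdot,i)]_\sigma$ so the restriction by $\mathcal{G}$ commutes with passage to Eudoxus reals. Your steps (a)--(e) merely spell out verifications the paper leaves implicit.
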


\section{Universal and \emph{On}-saturated Hyperreal Number Systems}

We call a hyperreal field $\RRR$ {\it universal} if any hyperreal
field, which is a set or a proper class of \emph{NBG}, can be
elementarily embedded in~$\RRR$.  Obviously a universal hyperreal
field is necessarily a proper class.  We now want to construct a
definable hyperreal field with the property that any definable
hyperreal field that can be obtained in \emph{NBG} by a one-step
construction from a collection of $\Z$-valued functions can be
elementarily embedded in it. In a subsequent remark we point out that
in \emph{NBG} we can actually construct a definable hyperreal field so
that every hyperreal field (definable or non-definable) can be
elementarily embedded in it. Moreover, the universal hyperreal field
so constructed is isomorphic to the \emph{On}-saturated hyperreal
field described in~\cite{Eh12}.

Notice that \emph{NBG} implies that there is a well order $\leqslant_V$
on $V$ where $V$ is the class of all sets. A class $X\subseteq V$ is
called $\Delta_0$-definable if there is a first-order formula
$\varphi(x)$ with set parameters in the language $\{\in,\leqslant\}$
such that for any set $a\in V$, $a\in X$ if and only if $\varphi(a)$
is true in $V$.  Trivially, every set is $\Delta_0$-definable.  We
work within a model of \emph{NBG} with set universe $V$ plus all
$\Delta_0$-definable proper subclasses of $V$.  By saying that a class
$A$ is definable we mean that $A$ is $\Delta_0$-definable.

Let $\Sigma$ be the class of all finite sets of ordinals, i.e.,
$\Sigma=On^{<\omega}$. Notice that $\Sigma$ is a definable proper class.
Let $\mathcal{P}$ be the
collection of all definable subclasses of $\Sigma$.  Notice
that we can code $\mathcal{P}$ by a definable class.%
\footnote{ This is true because each definable subclass of $\Sigma$
can be effectively coded by the G\"{o}del number of a first-order
formula in the language of $\{\in,\leqslant_V\}$ and a set in~$V$.  By
the well order of $V$ we can determine a unique code for each
definable class in~$\mathcal{P}$.}
Using the global choice, we can form a non-principal definable
ultrafilter $\mathcal{F}\subseteq\mathcal{P}$ such that for each
$\alpha\in On$, the definable class
\[
\hat{\alpha}=\{s\in\Sigma:\alpha\in s\}
\]
is in $\mathcal{F}$.  Again $\mathcal{F}$ can be coded by a definable
class.  Let $\A_0(\Z\times\Sigma,\Z)$ be the collection of all
definable class functions $F$ from $\Z\times\Sigma$ to $\Z$ such that
for each $s\in\Sigma$, $F(x,s)$ is an almost homomorphism from $\Z$ to
$\Z$.  For any two functions $F$ and $G$ in $\A_0(\Z\times\Sigma,\Z)$,
we write $F\sim_{\sigma\tau} G$ if and only if the definable class
\[
\{s\in\Sigma:F(x,s)-G(x,s)\in S\}
\]
is in $\mathcal{F}$. Let $[F]_{\sigma\tau}$ be the collection of all
definable classes $G$ in $\A_0(\Z\times\Sigma,\Z)$ such that
$F\sim_{\sigma\tau} G$.  Then $\sim_{\sigma\tau}$ is an equivalence
relation on $\A_0(\Z\times\Sigma,\Z)$.  Let
\[
\RRR_0=\A_0(\Z\times\Sigma,\Z)/\!\sim_{\sigma\tau}.
\]
By the arguments employed before, we can show that $\RRR_0$ is
isomorphic to the definable ultrapower of $\R$ modulo
$\mathcal{F}$. Hence $\RRR_0$ is a complete elementary extension of
$\R$. Therefore, $\RRR_0$ is a hyperreal field. By slightly modifying
the proof of \cite[Theorem 4.3.12, p.~255]{CK} we can prove the
following theorem.

\begin{theorem}\label{maximal}
$\RRR_0$ is a class hyperreal field, and any definable hyperreal field
$^*\R$ admits an elementary imbedding into $\RRR_0$.
\end{theorem}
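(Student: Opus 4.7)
The plan is to prove the statement in three stages: (i) confirm that $\RRR_0$ is a class hyperreal field, (ii) prove that $\RRR_0$ is sufficiently saturated (in fact \emph{On}-saturated), and (iii) use that saturation, together with the well-order of $V$ from Global Choice, to build an elementary embedding of an arbitrary definable hyperreal field $^*\R$ into $\RRR_0$ by transfinite recursion.

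For stage (i), the work is essentially already done in the setup: the isomorphism $\Phi$ identifies $\RRR_0$ with the definable ultrapower $\R^\Sigma/\mathcal{F}$, and a definable (class) version of \L os's theorem shows that this definable ultrapower is a complete elementary extension of $\R$. That the extension is proper is witnessed by a definable almost homomorphism of $\Z\times\Sigma$ into $\Z$ whose class in $\RRR_0$ is, e.g., a positive infinitesimal; the function $F(x,s)=\lfloor x/(1+|s|)\rfloor$ will do, since its slope in the $x$-variable tends to $0$ as $|s|$ grows through sets in $\mathcal{F}$.

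For stage (ii), I would verify \emph{On}-saturation of $\RRR_0$ directly from the special structure of $\mathcal{F}$. The crucial property is that for every ordinal $\alpha$, the definable class $\hat{\alpha}=\{s\in\Sigma:\alpha\in s\}$ lies in $\mathcal{F}$, and $\bigcap_{\alpha\in A}\hat{\alpha}=\{s\in\Sigma:A\subseteq s\}$ is nonempty for every finite set $A$ of ordinals. Given a finitely satisfiable type $\{\varphi_{\alpha}(x)\}_{\alpha<\kappa}$ over $\RRR_0$ with parameters coded by ordinals, one assigns to each $s\in\Sigma$ a witness in $\R$ to the finite conjunction $\bigwedge_{\alpha\in s\cap\kappa}\varphi_{\alpha}(x)$, using the well-order of $V$ to make the choice definable. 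The resulting definable map in $\A_0(\Z\times\Sigma,\Z)$ realizes the type in $\RRR_0$, via \L os's theorem and the filter property $\hat{\alpha}\in\mathcal{F}$.

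For stage (iii), given a definable hyperreal field $^*\R$, I would enumerate its underlying class along the well-order $\leqslant_V$ and build an elementary embedding $j:{}^*\R\to\RRR_0$ by transfinite recursion, extending one element at a time using the Tarski--Vaught criterion: at each stage, the type of the next element of $^*\R$ over the already-embedded portion is a definable (set-sized) finitely satisfiable type over $\RRR_0$, so \emph{On}-saturation produces a realization in $\RRR_0$ which is again chosen definably via $\leqslant_V$. The union of the stages is a definable class elementary embedding, exactly parallel to the universality argument of \cite[Theorem~4.3.12]{CK}.

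The main obstacle will be stage (iii), because it must be carried out entirely within \emph{NBG} with only definable classes allowed and without appealing to set-level choice on proper-class-sized objects. The book-keeping is delicate: each partial embedding must be a definable class, the types to be realized must themselves be definable from parameters in $V$, and the definability must be preserved at limit stages. This is what the phrase ``slightly modifying the proof of \cite[Theorem~4.3.12]{CK}'' is covering --- one replaces set-theoretic recursions and choice with definable recursions along $\leqslant_V$, and replaces references to ordinary ultrafilters with the definable ultrafilter $\mathcal{F}$ on the class $\Sigma$.
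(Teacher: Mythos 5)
Your stage (i) matches the paper (which simply notes that this part was established before the theorem statement), but stages (ii) and (iii) take a route that does not work for the $\RRR_0$ actually constructed here, and it is not the paper's route. The ultrafilter $\mathcal{F}$ is only required to contain the classes $\hat{\alpha}=\{s\in\Sigma:\alpha\in s\}$, i.e., it is a (definable) \emph{regular} ultrafilter, and regularity does not yield \emph{On}-saturation. Your proposed saturation argument breaks exactly where you ``assign to each $s\in\Sigma$ a witness in $\R$ to the finite conjunction $\bigwedge_{\alpha\in s\cap\kappa}\varphi_{\alpha}(x)$'': the formulas of a type over $\RRR_0$ carry parameters that are equivalence classes $[c_\alpha]_{\tau}$ of functions, so at coordinate $s$ you must witness $\bigwedge_{\alpha\in s\cap\kappa}\varphi_{\alpha}(x,c_\alpha(s))$ in $\R$. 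Finite satisfiability of the type in the ultrapower only says that for each \emph{fixed} finite $A$ the class of $t$ for which $\exists x\bigwedge_{\alpha\in A}\varphi_{\alpha}(x,c_\alpha(t))$ holds in $\R$ lies in $\mathcal{F}$; it does not produce a witness at every $s\supseteq A$. Securing that uniformly in $s$ is precisely what \emph{goodness} of the ultrafilter provides beyond regularity, and the paper's own remark following the theorem states that the $\mathcal{F}$ of this construction is merely a definable regular ultrafilter and that \emph{On}-saturation would require replacing it by a definable \emph{good} ultrafilter. So stage (ii) is a genuine gap, and stage (iii), which depends on it, falls with it.

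The paper's proof needs no saturation at all. Because every finite-dimensional relation on $\R$ is a named predicate $P_R$ of $\mathcal{L}'$, every first-order condition on a tuple from $^*\R$ is equivalent, in $\R$, in $^*\R$ and in $\RRR_0$ alike, to an atomic one; hence it suffices to control the quantifier-free diagram $\Lambda_{^*\R}$ of $^*\R$. One enumerates $\Lambda_{^*\R}$ by a definable bijection $j:\kappa\rightarrow\Lambda_{^*\R}$ and defines all the functions $F_r$ \emph{simultaneously}: for $s\in\Sigma$ the finite conjunction $\varphi_s=\bigwedge_{\alpha\in s\cap\kappa}j(\alpha)$ has its existential closure true in $^*\R$, hence true in $\R$ by elementarity --- the witnesses are sought in $\R$ itself, not as coordinatewise instantiations of ultrapower elements, which is why the difficulty above never arises --- and one takes the $\leqslant_V$-least witness tuple. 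Elementarity of $r\mapsto[F_r]_{\tau}$ then follows from the single regularity fact $\hat{\alpha}\in\mathcal{F}$ together with the fundamental theorem on ultraproducts. This is the universality-from-regularity argument of Chang--Keisler, Theorem 4.3.12, which is a different theorem from saturation-from-goodness; conflating the two is the root of the gap in your proposal.
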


\begin{proof}
We only need to prove the second part of the theorem.  For notational
convenience we view $\RRR_0$ as $\R^{\Sigma}/\mathcal{F}$ instead of
$\A_0(\Z\times\Sigma,\Z)/\sim_{\sigma\tau}$ in this proof.

Given a definable hyperreal field $^*\R$, recall that $\mathcal{L}$ is
the language of ordered fields, and
\[
\mathcal{L}'=\mathcal{L}\cup\{P_R:R\,\mbox{ is a finite-dimensional
relation on }\,\R\}.
\]
Let $\Lambda_{^*\R}$ be all quantifier-free $\mathcal{L}'$-sentences
$\varphi(r_1,r_2,\ldots,r_m)$ with parameters $r_i\in\,^*\R$ such that
$\varphi(r_1,r_2,\ldots,r_m)$ is true in $^*\R$. Since $^*\R$ is a
definable class, $\Lambda_{^*\R}$ is a definable class (under a proper
coding). Let $\kappa$ be the size of $\Lambda_{^*\R}$, i.e., $\kappa$
is the cardinality of $^*\R$ if $^*\R$ is a set and $\kappa=On$ if
$^*\R$ is a definable proper class.  Let $j$ be a definable bijection
from $\kappa$ to $\Lambda_{^*\R}$.

For each $r\in\,^*\R$ we need to find a definable function
$F_r:\Sigma\rightarrow\R$ such that the map $r\mapsto [F_r]_{\tau}$ is an
$\mathcal{L}'$-elementary embedding. We define these $F_r$ simultaneously.

Let $s\in\Sigma$. If $s\cap\kappa=\emptyset$ let $F_r(s)=0$.  Suppose
$s\cap\kappa\not=\emptyset$ and let $s'=s\cap\kappa$.  Notice that if
$\kappa=On$, then $s=s'$. Let
$\varphi_s(r_1,r_2,\ldots,r_m)=\bigwedge_{\alpha\in s'}j(\alpha)$.
Since \[\exists x_1,x_2,\ldots,x_m\varphi_s(x_1,x_2,\ldots,x_m)\] is
true in $^*\R$, it is also true in $\R$. Let
$(a_1,a_2,\ldots,a_m)\in\R^m$ be the $\leqslant_V$-least $m$-tuple
such that $\varphi_s(a_1,a_2,\ldots,a_m)$ is true in $\R$. If
$r\not\in\{r_1,r_2,\ldots,r_m\}$, let $F_r(s)=0$. If $r=r_i$ for
$i=1,2,\ldots,m$, then let $F_r(s)=a_i$. Since $\varphi_s$ is
quantifier-free, the functions $F_r$ are definable classes in \emph{NBG}.

We now verify that $\Phi:\,^*\R\rightarrow\RRR_0$ such that
$\Phi(r)=[F_r]_{\tau}$ is an $\mathcal{L}'$-elementary embedding.

Let $\phi(r_1,r_2,\ldots,r_m)$ be an arbitrary $\mathcal{L}'$-sentence
with parameters \newline $r_1,r_2,\ldots,r_m\in\,^*\R$.

Suppose that $\phi(r_1,r_2,\ldots,r_m)$ is true in $^*\R$. Since
$\phi(x_1,x_2,\ldots,x_m)$ defines an $m$-ary relation $R_{\phi}$ on
$\R$, we have that the $\mathcal{L}'$-sentence
\[\eta=:\forall x_1,x_2,\ldots,x_m(\phi(x_1,x_2,\ldots,x_m)\leftrightarrow
R_{\phi}(x_1,x_2,\ldots,x_m))\] is true in $\R$. Hence $\eta$ is true
in $^*\R$ and in $\RRR_0$.  One of the consequences of this is that
$R_{\phi}(r_1,r_2,\ldots,r_m)$ is true in $^*\R$, hence it is in
$\Lambda_{^*\R}$.  Let $\alpha\in\kappa$ be such that
$j(\alpha)=R_{\phi}(r_1,r_2,\ldots,r_m)$.  If $\alpha\in s$, then
\[
\varphi_s=R_{\phi}(r_1,r_2,\ldots,r_m)\wedge\bigwedge_{\beta\in
s',\beta\not=\alpha}j(\beta).\] Hence
$R_{\phi}(F_{r_1}(s),F_{r_2}(s),\ldots,F_{r_m}(s))$ is true in $\R$ by
the definition of the $F_r$'s.  Since $\eta$ is true in $\R$, we have
that $\phi(F_{r_1}(s),F_{r_2}(s),\ldots,F_{r_m}(s))$ is true in
$\R$. Thus
\[\{s\in\Sigma:\phi(F_{r_1}(s),F_{r_2}(s),\ldots,F_{r_m}(s)) \,\mbox{
is true in }\,\R\}\supseteq\hat{\alpha}.\] Since $\hat{\alpha}$ is a
member of $\mathcal{F}$, we have that
$\phi([F_{r_1}]_{\tau},[F_{r_2}]_{\tau},\ldots,[F_{r_m}]_{\tau})$ is
true in $\R^{\Sigma}/\mathcal{F}$.

Suppose that $\phi(r_1,r_2,\ldots,r_m)$ is false in $^*\R$.  Then
$\neg\phi(r_1,r_2,\ldots,r_m)$ is true in $^*\R$. Hence by the same
argument we have that
\[
\neg\phi([F_{r_1}]_{\tau},[F_{r_2}]_{\tau},\ldots,[F_{r_m}]_{\tau})\,\mbox{
is true in }\,\R^{\Sigma}/\mathcal{F}\] which implies that
\[\phi([F_{r_1}]_{\tau},[F_{r_2}]_{\tau},\ldots,[F_{r_m}]_{\tau})\,\mbox{
is false in }\,\R^{\Sigma}/\mathcal{F}.\] Hence $\Phi(r)=[F_r]_{\tau}$
is an $\mathcal{L}'$-elementary embedding from $^*\R$ to $\RRR_0$.
\end{proof}

\begin{remark}
We have shown that every definable hyperreal field can be elementarily
embedded into~$\RRR_0$.  If we want to show that $\RRR_0$ is
universal, we need to elementarily embed every (definable or
non-definable) hyperreal field $^*\R$ into the definable hyperreal
field $\RRR_0$. Notice that there are models of \emph{NBG} with
non-definable classes.  The proof of Theorem \ref{maximal} may not
work when $^*\R$ is a non-definable class because the bijection $j$
may not be definable and $F_r$ may not be definable.  If $F_r$ is not
definable, $[F_r]_{\tau}$ may not be an element in $\RRR_0$.

The idea of making every hyperreal field embeddable into $\RRR_0$ is
that we can make $\RRR_0$ \emph{On}-saturated by selecting a definable
ultrafilter $\mathcal{F}$ more carefully. Notice that \emph{NBG}
implies that every proper class has the same size \emph{On}. Hence
$^*\R$ can be expressed as the union of \emph{On}-many sets. If we can
make sure that $\RRR_0$ is \emph{On}-saturated, i.e.,
$\alpha$-saturated for any set cardinality $\alpha$, then $^*\R$ can
be elementarily embedded into~$\RRR_0$ although such an embedding may
be non-definable.

The ultrafilter $\mathcal{F}$ used in the construction of $\RRR_0$ in
the proof of Theorem \ref{maximal} is a definable version of a {\it
regular} ultrafilter.  In order to make sure that $\RRR_0$ is
\emph{On}-saturated, we need to require that $\mathcal{F}$ be a
special kind of definable regular ultrafilter called a definable {\it
good} ultrafilter. The definition of a (set) good ultrafilter can be
found in \cite[p.~386]{CK}.  The construction of an $\alpha^+$-good
ultrafilter can be found in either \cite[Theorem 6.1.4]{CK} or Kunen
\cite{Ku}.  By the same idea of constructing $\mathcal{F}$ above we
can follow the steps in \cite{Ku} or \cite{CK} to construct a
definable class good ultrafilter $\mathcal{F}$ on \emph{On}. Now the
ultrapower $\RRR_0$ of $\R$ modulo the definable class good
ultrafilter $\mathcal{F}$ is \emph{On}-saturated. The proof of this
fact is similar to that in \cite{CK}.  However, since the definition
of a definable class good ultrafilter and the proof of the saturation
property of the ultrapower modulo a definable class good ultrafilter
are long and technical, and the ideas are similar to what we have
already presented above, we will not include them in this paper.
\end{remark}

Another way of constructing a definable \emph{On}-saturated hyperreal
field $\RRR_0$ is by taking the union of an \emph{On}-long definable
elementary chain of set hyperreal fields $\left\{ {}^*\R_\alpha :
\alpha\in On\right\}$ with the property that ${}^*\R_\alpha$ is
$|\alpha|$-saturated.  However, this construction cannot be easily
translated into a ``one-step" construction.  Moreover, if we allow
higher-order classes, we can express $\RRR_0$ as a one-step limit
ultrapower following the same idea as in the proof of \cite[Theorem
6.4.10]{CK}.  However, this is not possible in \emph{NBG} since all
classes allowed in a model of \emph{NBG} are subclasses of $V$.
%
%
On the other hand, as we indicated above, the process of constructing
$\RRR_0$ as a definable ultrapower can be carried out in \emph{NBG},
and done so in a ``one-step" fashion.

\section*{Acknowledgments}

We are grateful to R.~Arthan, P.~Ehrlich, and V.~Kanovei for helpful
comments.

\end{document}